\newcommand{\shrinkmargins}[1]{
  \addtolength{\textheight}{#1\topmargin}
  \addtolength{\textheight}{#1\topmargin}
  \addtolength{\textwidth}{#1\oddsidemargin}
  \addtolength{\textwidth}{#1\evensidemargin}
  \addtolength{\topmargin}{-#1\topmargin}
  \addtolength{\oddsidemargin}{-#1\oddsidemargin}
  \addtolength{\evensidemargin}{-#1\evensidemargin}
  }
\DeclareMathOperator{\Hom}{Hom}
\DeclareMathOperator{\Ext}{Ext}
\DeclareMathOperator{\ad}{ad}
\DeclareMathOperator{\GL}{GL}
\DeclareMathOperator{\Fred}{Fred}
\DeclareMathOperator{\PU}{PU}
\DeclareMathOperator{\U}{U}
\DeclareMathOperator{\spinc}{spin_{\C}}
\newcommand{\field}[1]{\mathbb{#1}}
\newcommand{\Z}{\field{Z}}
\newcommand{\F}{\field{F}}
\newcommand{\R}{\field{R}}
\newcommand{\C}{\field{C}}
\newcommand{\Th}{\mathbf{Th}}
\newcommand{\ehat}{\hat{E}}
\newcommand{\Fhat}{\hat{F}}
\newcommand{\Lhat}{\hat{L}}
\newcommand{\Qhat}{\hat{Q}}
\newcommand{\Qbar}{\overline{Q}}
\newcommand{\ihat}{\hat{i}}
\newcommand{\hhat}{\hat{H}}
\newcommand{\vhat}{\hat{v}}
\newcommand{\what}{\hat{w}}
\newcommand{\shat}{\hat{S}}
\newcommand{\pitilde}{\tilde{\pi}}
\newcommand{\pihat}{\hat{\pi}}
\newcommand{\phat}{\hat{p}}
\newcommand{\dhat}{\hat{\Delta}}
\newcommand{\KK} {\mathcal{K}}
\newcommand{\id}{\mbox{id}}
\newcommand{\HH}{\mathcal{H}}
\newcommand{\beq}{\begin{displaymath}}
\newcommand{\eeq}{\end{displaymath}}
\newcommand{\beqn}{\begin{equation}}
\newcommand{\eeqn}{\end{equation}}
\theoremstyle{plain}
\newtheorem{thm}{Theorem}[section]
\newtheorem{prop}[thm]{Proposition}
\newtheorem{cor}[thm]{Corollary}
\newtheorem{lem}[thm]{Lemma}
\theoremstyle{definition}
\newtheorem{defn}[thm]{Definition}
\theoremstyle{remark}
\newtheorem{rem}[thm]{Remark}
\title{Topological T-duality is twisted Atiyah duality}
\author{Craig Westerland}
\begin{document}
\bibliographystyle{amsalpha}

\maketitle
\begin{abstract}

We show that the topological T-duality for circle bundles introduced in \cite{bev} can be interpreted as a form of Atiyah duality for twisted $K$-theory.

\end{abstract}

\section{Introduction}

T-duality arises in physics as an equivalence between Type IIA and Type IIB string theory on different manifolds.  This has consequences for the twisted $K$-theories of these manifolds, as they are the receptacles for certain fields in these theories.

The topological aspects of T-duality, as investigated in articles such as \cite{bev, bs, bhv, bv_loop, brs}, can be phrased in simplest form as follows.  For a $\spinc$-manifold $M$, the set of pairs $(E, H)$, consisting of a principal $S^1$-bundle $\pi: E \to M$ and a class $H \in H^3(E)$ come in dual pairs.  For such a circle bundle $E$, we write $c_1(E) \in H^2(M)$ for the first Chern class of the associated complex line bundle.  Then for such a pair $(E, H)$, the dual pair is another circle bundle $\pihat: \ehat \to M$ and class $\hhat \in H^3(\ehat)$.  These data are woven together in the following way\footnote{We follow the convention of \cite{bs} rather than \cite{bev} for the signs in these formulas; the results are true for either formulation.  We refer the reader to section \ref{model_section} for a discussion of signs in the twistings.}:
$$\begin{array}{ccc}
\pi_!(H) = -c_1(\ehat), & {\rm and} & \pihat_!(\hhat) = -c_1(E).
\end{array}$$
The remarkable fact about T-dual circle bundles is that their twisted $K$-theories are isomorphic:
\beqn K_H^*(E) \cong K^{*+1}_{\hhat}(\ehat). \label{iso_eqn} \eeqn

This isomorphism was introduced in \cite{bev} and re-examined in \cite{bs}.  In this paper, we interpret this isomorphism as a form of Atiyah duality for modules over the $K$-theory spectrum $\KK$.

Classically, Atiyah duality is a homotopy equivalence of spectra 
$$a: M^{-TM} \to F(M_+, S).$$
Here $M^{-TM}$ is the \emph{Thom spectrum} of the normal bundle of an embedding of $M$ into $\R^N$, and $F(M_+, S)$ is the \emph{Spanier-Whitehead dual} of $M$, the function spectrum of maps from $M_+$ to the sphere spectrum $S$.   This equivalence arises as the adjoint of a pairing $M^{-TM} \wedge M_+ \to S$ which realizes the intersection pairing in the stable homotopy category.
 Our aim in this paper is a proof of the following similar fact:

\begin{thm} \label{main_thm}

There is a nondegenerate pairing
$$\mu: \KK^H(E)^{-TM} \wedge_\KK \KK^{-\hhat}(\ehat) \to \KK$$
of $\KK$-modules; that is, its adjoint is an equivalence $t: \KK^H(E)^{-TM} \to F_\KK(\KK^{-\hhat}(\ehat), \KK)$.

\end{thm}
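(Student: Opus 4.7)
I would prove Theorem \ref{main_thm} by constructing $\mu$ via the T-duality correspondence $E \xleftarrow{p} Z \xrightarrow{\phat} \ehat$, where $Z := E \times_M \ehat$, and then proving nondegeneracy by identifying the induced map on homotopy groups with the T-duality isomorphism \eqref{iso_eqn}. The key geometric input, following \cite{bs}, is that on $Z$ the combined twisting $p^* H - \phat^* \hhat$ admits a canonical trivialization; the T-duality relations $\pi_!(H) = -c_1(\ehat)$ and $\pihat_!(\hhat) = -c_1(E)$ are precisely the primary obstructions to such a trivialization. Let $j: Z \hookrightarrow E \times \ehat$ be the inclusion and $q := \pi \circ p = \pihat \circ \phat : Z \to M$; since $Z$ is cut out of $E \times \ehat$ as the preimage of the diagonal $M \hookrightarrow M \times M$, the normal bundle of $j$ is $q^* TM$.

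I would assemble $\mu$ as the composite of (i) the external smash product $\KK^H(E)^{-TM} \wedge_\KK \KK^{-\hhat}(\ehat) \to \KK^{\pi^*H - \pihat^*\hhat - \pi^*TM}(E \times \ehat)$, (ii) the Pontrjagin--Thom collapse along $j$, whose normal bundle $q^*TM = j^* \pi^*TM$ cancels the $-TM$ twist via the Thom isomorphism and lands in $\KK^{p^*H - \phat^*\hhat}(Z)$, (iii) the trivialization above, identifying the target with $\KK(Z)$, and (iv) the Gysin pushforward $q_!: \KK(Z) \to \KK$, which exists because $Z$ is a framed torus bundle over the $\spinc$ manifold $M$ and hence $\spinc$.

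For $t$ to be an equivalence, it suffices to check that it induces an isomorphism on homotopy groups. By Atiyah duality applied to the $\spinc$ manifolds $E$ and $\ehat$ (both inherit $\spinc$ structures from $M$ and the framing of $S^1$), the source and target of $t$ compute, respectively, twisted $K$-homology of $E$ with twist $H$ and twisted $K$-homology of $\ehat$ with twist $\hhat$, up to the expected degree shift. Because $\mu$ is constructed from the same fiber-product correspondence and trivialization that underlie the construction of \eqref{iso_eqn} in \cite{bs}, the induced map on homotopy should realize precisely that isomorphism; since \eqref{iso_eqn} is known to be an isomorphism, it follows that $t$ is an equivalence.

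The most delicate step will be the last one: verifying that $t$ indeed recovers \eqref{iso_eqn} requires carefully matching Thom classes, $\spinc$ orientations, and the canonical trivialization of $p^*H - \phat^*\hhat$ (which ultimately arises from a Poincar\'e-line-bundle-type construction on the $T^2$-fibers of $q$) with the cycle-level construction in \cite{bs}. Careful bookkeeping of signs, the very issue flagged in the footnote of the introduction, will be crucial. Should a direct identification prove too delicate, a fallback strategy is a local-to-global argument: over contractible opens $U \subset M$ all twistings trivialize and $\mu|_U$ reduces to classical Atiyah duality for $U \times T^2$, with a Mayer--Vietoris descent argument extending the local equivalences to $M$.
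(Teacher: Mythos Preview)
Your construction of $\mu$ and your strategy for nondegeneracy are essentially the paper's: the paper builds $\mu$ as $(1\wedge c)\circ\Lambda\circ\dhat^{!}$, where $\dhat$ is your inclusion $j:Z\hookrightarrow E\times\ehat$ and $\Lambda$ is your trivialization of $p^*H-\phat^*\hhat$, and then proves nondegeneracy by exhibiting a homotopy-commutative square identifying $t$, via twisted Atiyah duality $a_\KK$ on the left and the tautological equality $u:F_\KK(\KK^{-\hhat}(\ehat),\KK)=\KK_{\hhat}(\ehat)$ on the right, with the T-duality map $\phat_{!}\circ\Lambda^{*}\circ p^{*}$ of \cite{bs}.

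Two small remarks. First, the final map in your construction of $\mu$ should simply be the collapse $c:Z_+\to S^0$ smashed with $\KK$; no Gysin map or $\spinc$ structure on $Z$ is used at that point, since after the Pontrjagin--Thom collapse and the cancellation of $q^*TM$ against $-TM$ you are already in the untwisted $K$-homology spectrum $\KK\wedge Z_+$. Second, the ``delicate step'' you correctly flag is dispatched in the paper by the elementary factorization $\dhat=(p\times\phat)\circ\Delta_{Z}$, whence $\dhat^{!}=\Delta_{Z}^{!}\circ(p^{!}\wedge\phat^{!})$; this turns the comparison of the adjoint of $\mu$ with $\phat_{!}\circ\Lambda^{*}\circ p^{*}$ into a two-line computation, so your anticipated careful bookkeeping of Thom classes and orientations is largely unnecessary, and your Mayer--Vietoris fallback is not needed.
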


Here $\KK$ is the complex $K$-theory spectrum, and $\KK^H(E)$ is the spectrum whose homotopy groups are the twisted $K$-\emph{homology} $K^H_*(E)$.  The same geometry that defines $M^{-TM}$ allows one to define a Thom spectrum $ \KK^H(E)^{-TM}$; when $H=0$, this spectrum is equivalent to $\KK \wedge E^{-TM}$.  Further, these spectra are $\KK$-modules;  $F_\KK(\KK^{-\hhat}(\ehat), \KK) =: \KK_{-H}(\ehat)$ is the function spectrum of $\KK$-module maps from $\KK^{-\hhat}(\ehat)$ to $\KK$.  It plays the same role in the category of $\KK$-modules that the Spanier-Whitehead dual plays in the stable homotopy category, and its homotopy groups compute the the twisted $K$-cohomology $K^*_{-H}(\ehat)$.

The T-duality isomorphism (\ref{iso_eqn}) is equivalent to Theorem \ref{main_thm} via a twisted version of a somewhat standard argument using classical Atiyah and Spanier-Whitehead duality.

One aspect of this duality is its inherently ``derived" nature.  That is, the nondegeneracy of the pairing $\mu$ is rarely realized on the level of homotopy groups.  It is often the case that the twisted $K$-theory groups in question are torsion.  Since $K_*$ is, however, not torsion, it is then impossible for $\mu_*$ itself to be a nondegenerate pairing of $K_*$-modules.  Nonetheless, the adjoint of $\mu$ is an equivalence, a subtlety that is played out in the degree shift in (\ref{iso_eqn}), by way of the $\Ext$ term in the universal coefficient theorem\footnote{The reader should be reminded of the degree shifts in the torsion of the (co)homology of manifolds via Poincar\'{e} duality.}. 

\subsection{Acknowledgements}

Thanks are due to Jacobien Carstens for many helpful conversations about this material.  It is also a pleasure to thank Peter Bouwknegt and Varghese Mathai for their time and help in explaining \cite{bev}, and to beg their forgiveness for inflicting stable homotopy theory upon their unsuspecting research program.  It is worth pointing out that it is not the purpose of this paper to reprove or extend the T-duality isomorphism (\ref{iso_eqn}).  What is new here is the ``intersection theory" point of view; otherwise, this is a meditation on \cite{bev, bs}, and an advertisement for T-duality to stable homotopy theorists.

This project was begun quite some time ago at the University of Melbourne and finished at the University of Minnesota.  I would like to thank both institutions for their support, as well as acknowledging the support of the ARC (via the Future Fellowship FT100100307) and the NSF (via DMS-1406162).

\subsection{Notation and language}  

For a map $f: X \to Y$ of spaces, we use the convention that for any homology theory $h_*$, $f_*: h_*(X) \to h_*(Y)$ denotes the induced map in homology.  If $f$ admits a codimension $d$ umkehr map (e.g., a Becker-Gottlieb type transfer or a Pontrjagin-Thom collapse/intersection map), we will write the induced map in homology as $f^!: h_*(Y) \to h_{*-d}(X)$.  In cohomology, the usual induced map is denoted $f^*: h^*(Y) \to h^*(X)$, and the transfer $f_!: h^*(X) \to h^{*+d}(Y)$.

We will also write $f^!$ for umkehr map itself in the stable homotopy category; i.e., $f^!: Y \to X^\nu$, where $\nu = f^*(TY) - TX$.  Here, for a vector bundle $\xi$ over $X$, we write $X^\xi$ for the Thom space of the bundle. 

\section{Classical Poincar\'e duality}

\subsection{The intersection pairing}

Let $h$ be a multiplicative cohomology theory and let $M$ be a closed manifold which is orientable with respect to $h$.  Examine the diagram
$$\xymatrix@1{
M \times M & M \ar[l]_-{\Delta} \ar[r]^-{c} & pt,
}$$
where $\Delta$ is the diagonal, and $c$ is the constant map to a point.  Orientability ensures that $c$ admits a pushforward (or shriek) map in $h^*$; the composite
$$\xymatrix@1{
h^*(M) \otimes_{h^*} h^*(M) \ar[r]^-{\times} & h^*(M \times M) \ar[r]^-{\Delta^*} & h^*(M) \ar[r]^-{c_!} & h^{*-\dim M}(pt)
}$$
is the cup product pairing in $h$-theory.  The adjoint is a map $h^*(M) \to \Hom_{h^*}(h^{*-\dim M}(M), h^*)$.  When $h^* = H^*( \cdot; \F)$ is singular cohomology with field coefficients, the codomain of this map is $\Hom_{\F}(H^{*-\dim M}(M), \F) \cong H_{*-\dim M}(M)$, and this is the usual Poincar\'e duality isomorphism.

In $h$-homology, a similar construction may be made; the orientability of $M$ provides a pushforward map for $\Delta$ via the Pontrjagin-Thom construction:
$$\xymatrix@1{
h_*(M) \otimes_{h_*} h_*(M) \ar[r]^-{\times} & h_*(M \times M) \ar[r]^-{\Delta^!} & h_{*-\dim M}(M) \ar[r]^-{c_*} & h_{*-\dim M}(pt).
}$$
When $h_*$ is a homology theory consisting of geometric cycles, the composite of the first two maps can often be interpreted as the intersection of those cycles in $M$.

\subsection{Atiyah duality}

If $e: M \to \R^N$ is an embedding with normal bundle $\nu$, we will write $M^{-TM}$ for the Thom spectrum
$$M^{-TM} := \Sigma^{-N} M^{\nu},$$
the desuspension (in the stable homotopy category) of the Thom space of $\nu$.

The intersection product may be constructed in the category of spectra as follows: the Pontrjagin-Thom collapse map for $\Delta$ is of the form $\Delta^!: M \times M \to M^{TM}$, where $TM$ is the tangent bundle of $M$ (masquerading as the normal bundle of $\Delta$), and $M^{TM}$ is its Thom space.  Adding a factor of $\nu$ and desuspending by $N$, we obtain a composite map
$$\xymatrix@1{
M_+ \wedge M^{-TM} \ar[r]^-{\Delta^!} & M_+ \ar[r]^-{c} & S.
}$$

The adjoint map 
$$a: M^{-TM} \to F(M_+, S)$$
is an equivalence; this is \emph{Atiyah duality}.

To say that $M$ is $h$-oriented is to say that there is a Thom isomorphism in $h$-theory for $TM$.  That is, there is an equivalence $T: \Sigma^{-\dim M} h \wedge M_+ \simeq h \wedge M^{-TM}$.  Then smashing $a$ with $h$ gives
$$\xymatrix@1{
\Sigma^{-\dim M} h \wedge M_+ \ar[r]^-{T} & h \wedge M^{-TM} \ar[r]^-{1 \wedge a} & h \wedge F(M_+, S) \ar[r]^-{\simeq} & F(M_+, h).
}$$
Classical Poincar\'e duality follows from this equivalence: the homotopy groups of the domain are $h_{*+\dim M}(M)$, and those of the target are $h^{-*}(M)$.

\section{Twisted $K$-theory}

Twisted $K$-theory is an invariant associated to a topological space $X$ and a class\footnote{Really, this is only an invariant of the representative cocycle for $H$; cohomologous cocycles define isomorphic twisted K-theories, but the isomorphism itself depends upon the choice of coboundary.}  $H \in H^3(X)$.  We briefly review its construction and properties; while many of these facts are well-established in the literature, we include some equivariant homotopy-theoretic proofs in section \ref{equivariant_section}.

\subsection{The definition}

One may regard $H \in H^3(X)$ as a map $H: X \to K(\Z, 3)$.  We recall that $K(\Z, 2) = \C P^\infty$ is a summand of $\GL_1(\KK) = BU_\otimes \times \{ \pm 1 \}$; this is the inclusion of the Picard group of line bundles into the multiplicative group of virtual bundles of dimension $\pm 1$.  Here, the unit space $\GL_1(E)$ of an $E_\infty$-ring spectrum $E$ is an infinite loop space via the multiplication in $E$; this gives rise to an $E_\infty$ action of $\GL_1(E)$ upon $E$.

This map $\C P^\infty \to BU_\otimes \times \{ \pm 1\}$ deloops, and the composite
$$\xymatrix@1{X \ar[r]^-{H} & K(\Z, 3) \ar[r] & B\GL_1(\KK)}$$
defines a $\KK$-module Thom spectrum $\KK^H(X)$ over $X$; see, e.g., \cite{abg}, where this spectrum would be notated $X^H$.   Specifically, $\KK^H(X)$ may be constructed as the derived smash product\footnote{For the rest of the paper, we will tend to omit the $\Sigma^\infty$ and $L$ from the notation for this construction, writing it simply as $(\Phi_H)_+ \wedge_{K(\Z, 2)} \KK$.}
$$\KK^H(X) \simeq \Sigma^\infty (\Phi_H)_+ \wedge^L_{\Sigma^\infty K(\Z, 2)_+} \KK.$$
where $\phi: \Phi_H \to X$ is the principal $K(\Z, 2)$-bundle associated to the map $H$.

\begin{defn} The (unreduced) \emph{$H$-twisted $K$-(co)homology groups of $X$} are the homotopy groups
$$\begin{array}{ccc} K^H_*(X) := \pi_* \KK^H(X) & {\rm and} & K_H^*(X) := \pi_{-*} \KK_H(X) \end{array}$$
where we define $\KK_H(X)$ to be the function spectrum of $\KK$-module maps $\KK_H(X) := F_\KK(\KK^H(X), \KK)$.

\end{defn}

It is worth noting that when $H=0$, $\KK^H = X_+ \wedge \KK$ and $\KK_H(X) = F(X_+, \KK)$.  The homotopy groups of these spectra do indeed compute the untwisted $K$-homology and cohomology of $X$.

Notice that the analogue of the universal coefficient theorem holds by definition: for any space $X$ and twisting $H \in H^3(X)$, we will write $u$ for the equality
$$u: F_\KK(\KK^H(X), \KK) \to \KK_{H}(X).$$
Then the universal coefficient spectral sequence (e.g., from \cite{ekmm} Theorem IV.4.1) in this setting is
$$\Ext_{K^*}^{p, q}(K^H_*(X), K^*) \implies K_H^{p+q}(X).$$

Classically \cite{atiyah-segal}, one defines these groups via the description of K-theory in terms of Fredholm operators.  One model for $K(\Z,2)$ is the group $\PU(\HH)$, the projective unitary group of a Hilbert space $\HH$.   Set up in this fashion, we may regard $\Phi_H \to X$ as a principal $\PU(\HH)$-bundle.  The group $\PU(\HH)$ acts naturally on the space $\Fred(\HH)$ of Fredholm operators on $\HH$.  Defining 
$$k^0(X, H) := \Phi_H \times_{\PU(\HH)} \Fred(\HH),$$ 
then the map $k^0(X, H) \to X = \Phi_H / \PU(\HH)$ induced by $\pi$ is a fibration, with fibre $\Fred(\HH) \simeq BU \times \Z$.  

Further, since the the identity map $\id \in \Fred(\HH)$ is fixed under the action of $\PU(\HH)$, the space $\Omega \Fred(\HH)$ of loops based at the identity also admits a $\PU(\HH)$-action.  Define
$$k^1(X, H) := \Phi_H \times_{\PU(\HH)} \Omega \Fred(\HH),$$ 
This also fibres over $X$, with fibre $\Omega \Fred(\HH) \simeq \U$.  

Then twisted $K$-theory is isomorphic to the set of homotopy classes of continuous sections of these bundles:
$$K_H^i(X) \cong \Gamma[X; k^i(X, H)].$$
More generally, Bott periodicity ensures that the \emph{spaces} $\Gamma(X; k^i(X, H))$ of sections form a 2-periodic $\Omega$-spectrum; this is equivalent to $\KK_H(X)$.

\subsection{The Pontrjagin-Thom map}

For an embedding or submersion of manifolds with $\spinc$-normal bundle, we would like an umkehr map in twisted $K$-theory.  This has been accomplished in a number of papers: \cite{bev, carey_wang, fht1}.   

\begin{prop} \label{PT_map_prop}

Let $f: X \to Y$ be an embedding or submersion of compact manifolds, and write $\nu := f^*(TY) - TX$ for the normal bundle to $f$ (or negative vertical tangent bundle when $f$ is a submersion).  If $\nu$ is $\spinc$, then for each $H \in H^3(Y)$, there is a natural umkehr map
$$f^!: K^H_*(Y) \to K^{f^*H}_{*-\dim \nu}(X)$$

\end{prop}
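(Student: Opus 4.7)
The plan is to construct $f^!$ as the composite of a $K(\Z,2)$-equivariant Pontrjagin--Thom collapse with the twisted $\spinc$ Thom isomorphism. The map $f$ lifts canonically to a $K(\Z,2)$-equivariant map $\tilde f: \Phi_{f^*H} \to \Phi_H$ of principal bundles, which is again an embedding (respectively, a submersion) whose normal bundle (respectively, negative vertical tangent bundle) is the pullback $\tilde\nu$ of $\nu$ along the projection $\Phi_{f^*H} \to X$.

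The first step is to produce a $K(\Z,2)$-equivariant umkehr map
$$\tilde f^!: \Sigma^\infty (\Phi_H)_+ \longrightarrow \Sigma^\infty (\Phi_{f^*H})^{\tilde \nu}$$
of $\Sigma^\infty K(\Z,2)_+$-modules. In the embedding case this is the classical tubular-neighborhood collapse, carried out fiberwise over $Y$ so as to respect the principal bundle structure; in the submersion case it is the fiberwise Becker--Gottlieb transfer obtained from a fiberwise embedding of $X$ into a trivial bundle over $Y$. Smashing with $\KK$ over $\Sigma^\infty K(\Z,2)_+$ then yields a $\KK$-module map $\KK^H(Y) \to \KK^{f^*H}(X)^{\nu}$, where the target is by construction the $\KK^{f^*H}(X)$-module Thom spectrum of $\nu$. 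The $\spinc$ structure on $\nu$ supplies a Thom equivalence $\KK^{f^*H}(X)^{\nu} \simeq \Sigma^{\dim \nu}\KK^{f^*H}(X)$; this depends only on the bundle data of $\nu$ because the $K$-theoretic Thom class acts by multiplication on the $\KK$-module structure and is insensitive to the twisting. Passing to $\pi_*$ delivers $f^!$ with the claimed shift of degree.

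The main obstacle is justifying the equivariant Pontrjagin--Thom construction when the structure group $K(\Z,2) = \C P^\infty$ is infinite-dimensional, so that $\Phi_H$ is not a finite-dimensional manifold. I would address this by filtering $\Phi_H$ through the principal $\C P^n$-bundles arising from finite-dimensional approximations of the classifying map $H$, performing the PT collapse at each finite stage (where the structure group is a compact Lie group and the classical equivariant construction applies), and passing to the colimit after verifying that the stabilization maps are compatible. Alternatively one could work natively in the framework of parametrized spectra over $Y$ (in the sense of May--Sigurdsson), in which the Thom construction on $\nu$, the PT collapse for $f$, and the twisting by $H$ are all native operations; this viewpoint has the added benefit of making the naturality of $f^!$ in both $Y$ and $H$ transparent, since each of the three ingredients is visibly natural.
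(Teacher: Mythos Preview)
Your proposal is correct, but the paper takes a different route.  You work on the principal-bundle side: lift $f$ to a $K(\Z,2)$-equivariant map $\Phi_{f^*H}\to\Phi_H$, perform an equivariant Pontrjagin--Thom collapse, and then smash over $K(\Z,2)$ with $\KK$.  The paper instead passes to the loop-group model of Proposition~\ref{borel_prop}, identifying $\KK^H(Y)\simeq \KK_{h\Omega Y}$ and $\KK_{-H}(Y)\simeq \KK^{h\Omega Y}$ (and likewise for $X$).  In that picture the ordinary induced map $f^*$ is simply the forgetful map $\KK^{h\Omega Y}\to\KK^{h\Omega X}$ along $\Omega f$, and $f^!$ is \emph{defined} by conjugating $f^*$ with Klein's norm equivalence $a_\KK$ (twisted Atiyah duality, Lemma~\ref{atiyah_lem}); one then suspends by $TY$ and applies the $\spinc$ Thom isomorphism.

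The trade-offs are as you would expect.  Your construction is more hands-on and closer to the classical geometric picture, but---as you correctly flag---the infinite-dimensionality of $K(\Z,2)$ forces either a filtration argument through $\C P^n$-bundles or a wholesale move into parametrized spectra.  The paper's approach sidesteps this entirely: since the norm map is an equivalence whenever $BG$ is finitely dominated, no finite-dimensionality of the structure group is needed, and the umkehr map drops out formally.  The cost is that one must first establish the equivariant model and the norm-map Atiyah duality, and the connection to the literal Pontrjagin--Thom collapse $Y\to X^\nu$ (which the paper alludes to just after the Proposition) is less direct.
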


We will give a construction of this map in section \ref{shriek_section}, using equivariant homotopy-theoretic methods.  First, however, we will present it in the stable category.  Let $p: \xi \to X$ a vector bundle; then $\phi^* \xi \to \Phi_H$ is a $K(\Z, 2)$-equivariant vector bundle.

\begin{defn}

The \emph{twisted $K$-theory Thom spectrum of $\xi$} is $\KK^H(X)^\xi := \Phi_H^{\phi^*(\xi)} \wedge_{K(\Z, 2)} \KK$.

\end{defn}

For instance, when $\xi$ is the trivial $n$-plane bundle, $\KK^H(X)^\xi \simeq \Sigma^n \KK^H(X)$.  This holds more generally when $\xi$ is $\spinc$.  Also note that this definition extends to virtual bundles by desuspension.

For an embedding or submersion of manifolds with $\spinc$-normal bundle, Proposition \ref{PT_map_prop} has a realization on the level of spectra: there is a Pontrjagin-Thom collapse map
$$f^!:  \KK^H(Y) \to \KK^{f^*H}(X)^\nu.$$
induced by the usual Pontrjagin-Thom map $Y \to X^\nu$.  We recover the umkehr map of Proposition \ref{PT_map_prop} when $\nu$ is $\spinc$ by examining the induced map in homotopy groups and using the Thom isomorphism for $\nu$. 

\subsection{Classical duality theorems in twisted $K$-theory}

Atiyah duality $a: M^{-TM} \to F(M_+, S)$ gives, after smashing with $\KK$, an equivalence
$$M^{-TM} \wedge \KK \to F(M_+, \KK).$$
This generalizes to the twisted setting in the following way:

\begin{lem} \label{atiyah_lem}

There is an equivalence $a_\KK: \KK^H(M)^{-TM} \to \KK_{-H}(M)$.

\end{lem}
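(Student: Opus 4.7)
The plan is to produce $a_\KK$ as the adjoint of a pairing
$$\mu_0 : \KK^H(M)^{-TM} \wedge_\KK \KK^{-H}(M) \longrightarrow \KK$$
built from the Pontryagin-Thom construction on the diagonal $\Delta: M \to M \times M$, and then to verify that the adjoint is an equivalence by reducing to the untwisted Atiyah duality established just before the lemma.

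For the construction, I would observe that the external twisting
$$H \boxplus (-H) := \pi_1^* H - \pi_2^* H \in H^3(M \times M)$$
satisfies $\Delta^*(H \boxplus (-H)) = 0$, and that the exterior product of twisted Thom spectra gives an equivalence $\KK^H(M) \wedge_\KK \KK^{-H}(M) \simeq \KK^{H \boxplus (-H)}(M \times M)$. The diagonal is an embedding with normal bundle $TM$, so Thomifying the first factor by $-TM$ and applying the twisted spectrum-level Pontryagin-Thom map from Proposition \ref{PT_map_prop} produces
$$\Delta^! : \KK^{H \boxplus (-H)}(M \times M)^{-\pi_1^* TM} \longrightarrow \KK(M)^{-TM \oplus TM} \simeq \KK \wedge M_+.$$
Composing with the collapse $c_*: \KK \wedge M_+ \to \KK$ defines $\mu_0$, and its $\KK$-module adjoint is the desired map $a_\KK: \KK^H(M)^{-TM} \to F_\KK(\KK^{-H}(M), \KK) = \KK_{-H}(M)$.

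To verify that $a_\KK$ is an equivalence, I would first specialise to $H = 0$. In that case $\KK^0(M)^{-TM} \simeq \KK \wedge M^{-TM}$, $\KK_0(M) \simeq F(M_+, \KK)$, and the pairing $\mu_0$ unwinds to the smash of the classical pairing $M^{-TM} \wedge M_+ \to S$ with $\KK$; its adjoint is classical Atiyah duality smashed with $\KK$, and hence an equivalence. For general $H$, choose a finite good cover $\{U_i\}$ of $M$ on which $H$ trivialises. Over each intersection of members of the cover, naturality of the whole construction identifies the restriction of $a_\KK$ with the untwisted (smashed) Atiyah duality for that open set. Both the source and target of $a_\KK$ satisfy Mayer-Vietoris as functors of open subsets of $M$ (the source via the Thom-spectrum description $\Phi_H^{\phi^*(-TM)} \wedge_{K(\Z,2)} \KK$, the target via $F_\KK(-, \KK)$ applied to a cofibre sequence), and $a_\KK$ is natural for open inclusions. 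An induction on the size of the cover, using the five-lemma on the Mayer-Vietoris cofibre sequences, then promotes the local equivalences to a global equivalence.

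The main obstacle is bookkeeping at the level of spectra: the equality $\Delta^*(H \boxplus (-H)) = 0$ is cohomological, but for $\mu_0$ to be a well-defined map of $\KK$-modules one needs a specific null-homotopy of the composite $M \to M \times M \to B\GL_1(\KK)$, coherent with the Thom structure on $TM$ used by $\Delta^!$ and with the exterior product. Unpacking this amounts to checking that the diagonal $\Phi_H \to \Phi_H \times \Phi_{-H}$ is $K(\Z,2)$-equivariantly trivialised in a way compatible with the Pontryagin-Thom collapse; once this coherence is in hand, the local-to-global argument above reduces everything to the classical statement and completes the proof.
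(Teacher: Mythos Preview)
Your argument is correct, but it takes a genuinely different route from the paper. The paper (section \ref{atiyah_k_section}) does not build an explicit pairing and then run a Mayer--Vietoris induction; instead it invokes Klein's norm map. For $G = \Omega M$ the Kan loop group of a Poincar\'e duality space, the dualizing spectrum $S^{\ad G} = G^{hG}$ is $G$-equivariantly equivalent to $S^{-TM}$, and Klein's theorem gives an equivalence $\eta: S^{\ad G} \wedge_{hG} E \to E^{hG}$ for any $G$-spectrum $E$ whenever $BG$ is finitely dominated. Taking $E = \KK$ with the $G$-action induced by $\rho_H$, and identifying the two sides via Proposition \ref{borel_prop}, one reads off $a_\KK$ directly as an instance of $\eta$. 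This buys a great deal: there is no cover, no local trivialisation, and no coherence bookkeeping for the null-homotopy of $\Delta^*(H \boxplus(-H))$ --- the sign of the twisting and the identification of both sides fall out of the fixed-point/orbit descriptions automatically, and the argument applies to any finitely dominated Poincar\'e duality space rather than only manifolds admitting a finite good cover. Your approach, by contrast, keeps the intersection-pairing geometry in the foreground and is essentially the same template the paper later uses for the T-duality pairing $\mu$ itself; the price is exactly the homotopy-coherence checking you flag as the main obstacle.
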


We will give a proof of Lemma \ref{atiyah_lem} in section \ref{atiyah_k_section} using equivariant homotopy theory (where the sign of the twisting will also be addressed).  Taking homotopy groups and using the Thom isomorphism, this gives Poincar\'{e} duality for twisted $K$-theory:

\begin{cor} \label{PDT}

If $M$ is a $K$-oriented $d$-manifold, then there is an isomorphism
$$K^{H}_{d-*}(M) \cong K_{-H}^{*}(M).$$

\end{cor}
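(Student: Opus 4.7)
The strategy is to extract the corollary from Lemma \ref{atiyah_lem} by taking homotopy groups on both sides of the equivalence $a_\KK\colon \KK^H(M)^{-TM} \to \KK_{-H}(M)$ and translating each side into twisted $K$-(co)homology. By the very definition of $K_{-H}^*$, the right hand side gives
$$\pi_n \KK_{-H}(M) = K_{-H}^{-n}(M).$$
The nontrivial step is identifying the homotopy groups of the left hand side; this is where the hypothesis that $M$ is $K$-oriented is used.

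The plan is to produce a twisted Thom isomorphism
$$\KK^H(M)^{-TM} \simeq \Sigma^{-d} \KK^H(M).$$
To see this, I would work with the model $\KK^H(M)^\xi = \Phi_H^{\phi^*(\xi)} \wedge_{K(\Z,2)} \KK$ from the previous subsection. The $K$-orientation of $TM$ provides a $K$-theoretic Thom class for $-TM$, i.e.\ an equivalence $M^{-TM} \wedge \KK \simeq \Sigma^{-d} M_+ \wedge \KK$. Pulling this class back along the principal $K(\Z,2)$-bundle $\phi\colon \Phi_H \to M$ yields a $K(\Z,2)$-equivariant Thom class for $\phi^*(-TM)$, since the $K(\Z,2)$-action is on the base $\Phi_H$ and commutes with the vector bundle structure of $\phi^*(TM)$. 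The resulting equivariant Thom equivalence $\Phi_H^{\phi^*(-TM)} \wedge \KK \simeq \Sigma^{-d} (\Phi_H)_+ \wedge \KK$ descends to the derived smash product over $K(\Z,2)$ to give the displayed equivalence of $\KK$-modules.

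Once this is in place, the corollary is a bookkeeping exercise. The left side contributes
$$\pi_n \KK^H(M)^{-TM} \cong \pi_{n+d} \KK^H(M) = K^H_{n+d}(M),$$
and matching with the right side produces $K^H_{n+d}(M) \cong K_{-H}^{-n}(M)$. Setting $* = -n$ (so $n + d = d - *$) reindexes this as $K^H_{d-*}(M) \cong K_{-H}^{*}(M)$.

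The only real obstacle is checking that the Thom isomorphism passes through the twisting, and this amounts to an equivariance observation about the pulled-back orientation. Everything else is a direct application of Lemma \ref{atiyah_lem} and the definitions.
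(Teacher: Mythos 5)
Your proposal is correct and follows the paper's own route: the paper likewise obtains Corollary \ref{PDT} by taking homotopy groups of the equivalence $a_\KK$ from Lemma \ref{atiyah_lem} and applying the twisted Thom isomorphism $\KK^H(M)^{-TM} \simeq \Sigma^{-d}\KK^H(M)$ furnished by the $K$-orientation (the $\spinc$/$K$-oriented case of the Thom equivalence noted after the definition of $\KK^H(X)^\xi$). Your equivariance argument for pulling the Thom class back along $\phi$ is exactly the justification implicit in the paper, and your degree bookkeeping is right.
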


\section{An equivariant approach} \label{equivariant_section}

\subsection{An equivariant homotopy-theoretic model for twisted $K$-theory} \label{model_section}

Let $X$ be any space.  In principal, every homotopy theoretic question about a bundle over $X$ may be translated into a question about the action of the based loop space $\Omega X$ on the fibre of the bundle.  Using this philosophy we may give an alternate description of the twisted $K$-theory spectra. 

The loop space $\Omega X$ is, of course, the prototypical $A_\infty$ monoid.  Furthermore, the loop of the map $H: X \to K(\Z, 3)$ 
$$\rho_H: \Omega X \to K(\Z, 2)$$
is a map of $A_\infty$-spaces.  We note that one may therefore describe the $K(\Z, 2)$ bundle $\phi: \Phi_H \to X$ using this homomorphism as the Borel construction $\Phi_H \simeq E \Omega X \times_{\Omega X} K(\Z, 2)$ for this action.

Furthermore, there is an $A_\infty$ action of $\Omega X$ on the $K$-theory spectrum $\KK$ via $\rho_H$ and the $E_\infty$ map $K(\Z, 2) \to \GL_1 \KK$.  The associated homotopy fixed point and orbit spectra may be identified as the twisted $K$-(co)homology spectra:

\begin{prop} \label{borel_prop}

For the action of $\Omega X$ on $\KK$ given by $H: X \to K(\Z, 3)$, there are homotopy equivalences $\KK^H(X) \simeq \KK_{h\Omega X}$ and $\KK_{-H}(X) \simeq \KK^{h \Omega X}$.

\end{prop}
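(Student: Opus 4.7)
The plan is to unpack the defining smash product for $\KK^H(X)$ using the identification $\Phi_H \simeq E\Omega X \times_{\Omega X} K(\Z,2)$ already stated in the preamble, and then apply the tensor-hom adjunction to deduce the fixed-point statement. The identification of $\Phi_H$ is a general fact: for any map $H: X \to BK(\Z,2) = K(\Z,3)$, the classified principal $K(\Z,2)$-bundle admits such a Borel model because the loop homomorphism $\Omega H = \rho_H$ encodes the bundle's structure, and $B\Omega X \simeq X$.

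For the homotopy orbit statement, I would substitute this model into the definition and invoke associativity of the (derived) relative smash product:
\begin{align*}
\KK^H(X) &\simeq (\Phi_H)_+ \wedge_{K(\Z,2)} \KK \\
&\simeq \bigl(E\Omega X \times_{\Omega X} K(\Z,2)\bigr)_+ \wedge_{K(\Z,2)} \KK \\
&\simeq E\Omega X_+ \wedge_{\Omega X} \bigl(K(\Z,2)_+ \wedge_{K(\Z,2)} \KK\bigr) \\
&\simeq E\Omega X_+ \wedge_{\Omega X} \KK = \KK_{h\Omega X}.
\end{align*}
The residual $\Omega X$-action on $\KK$ is by construction the composite $\Omega X \xrightarrow{\rho_H} K(\Z,2) \to \GL_1(\KK)$, which is exactly the action in the statement.

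For the fixed-point equivalence, by definition $\KK_{-H}(X) = F_\KK(\KK^{-H}(X), \KK)$, and the part just proved (applied to $-H$ in place of $H$) presents the argument as $E\Omega X_+ \wedge_{\Omega X} \KK$ for the action via $\rho_{-H}$. The tensor-hom adjunction over $\KK$ then gives
\begin{align*}
F_\KK\bigl(E\Omega X_+ \wedge_{\Omega X} \KK,\, \KK\bigr) &\simeq F(E\Omega X_+,\, \KK)^{h\Omega X} \\
&\simeq \KK^{h\Omega X},
\end{align*}
using contractibility of $E\Omega X$. Under the adjunction, the residual $\Omega X$-action on the internal $\KK$ is the pointwise inverse of the original action; since inversion in $K(\Z,2) = \C P^\infty$ corresponds to negation of cohomology classes, this converts $\rho_{-H}$ back into $\rho_H$, giving $\KK^{h\Omega X}$ with the correct action.

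The principal obstacle is keeping the $A_\infty$-coherence strict enough to justify the spectrum-level manipulations, and, more delicately, tracking the sign of the twisting through the dualization step. This sign accounting is precisely the source of the asymmetry between $H$ on the orbit side and $-H$ on the fixed-point side, as flagged by the paper's earlier footnote on sign conventions. Working with a rectified model — for example, by replacing $\Omega X$ with an equivalent strict topological monoid, or by phrasing the whole argument in terms of $B\GL_1(\KK)$-local systems and their $\KK$-linear sections — renders each step above routine.
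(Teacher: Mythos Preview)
Your proof is correct and follows essentially the same route as the paper: the homotopy-orbit identification is literally the paper's computation, and your fixed-point argument via tensor--hom adjunction is the paper's chain $F_\KK \to F_{\KK[\Omega X]} \to F_{\Omega X}$ compressed into one step. The only cosmetic difference is that the paper tracks the sign flip by explicitly identifying $F_\KK(\KK_{(-H)}, \KK_{(0)}) \simeq \KK_{(H)}$ via the multiplication pairing $\KK_{(H)} \wedge \KK_{(-H)} \to \KK_{(0)}$, whereas you phrase it as inversion in $K(\Z,2)$ negating the class --- but these are the same observation.
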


\begin{proof}

The computation
$$\KK^H(X) =  (\Phi_H)_+ \wedge_{K(\Z, 2)} \KK \\
                  \simeq (E \Omega X \times_{\Omega X} K(\Z, 2))_+ \wedge_{K(\Z, 2)} \KK \\
                  \simeq (E\Omega X)_+ \wedge_{\Omega X} \KK \\
                  = \KK_{h \Omega X} $$
gives the first result.  The second is slightly more subtle; just for this argument, write $\KK_{(H)}$ for the spectrum $\KK$, equipped with the action of $\Omega X$ given by $\rho_H$:
\begin{eqnarray*}
\KK_{-H}(X) & = & F_\KK(\KK^{-H}(X), \KK) \\
 & \simeq & F_\KK((E\Omega X)_+ \wedge_{\Omega X} \KK_{(-H)}, \KK_{(0)}) \\
 & \simeq & F_{\KK[\Omega X]}((E\Omega X)_+ \wedge \KK_{(-H)}, \KK_{(0)}) \\
 & \simeq & F_{\Omega X}((E\Omega X)_+ , \KK_{(H)}) \\
 & = & \KK_{(H)}^{h\Omega X}.
\end{eqnarray*}
The second-to-last step uses the fact that $F_\KK(\KK_{(-H)}, \KK_{(0)}) \simeq \KK_{(H)}$.  This is an equivariant refinement of the obvious equivalence $F_\KK(\KK, \KK) \simeq \KK$, arising from the equivariant pairing
$$\KK_{(H)} \wedge \KK_{(-H)} \to \KK_{(0)}$$
given by multiplication in $\KK$.

\end{proof}

Note the change in the sign of the twisting class in this result.  However, the resulting twisted $K$-groups are independent of sign: $K^*_{-H}(X) \cong K^*_{H}(X)$, as one can prove using the twisted Atiyah-Hirzebruch spectral sequence, for instance.  More directly, there is an involution $\iota$ of $\KK$ induced by the duality operation on the category of complex vector spaces.  Since there is a natural isomorphism $(V \otimes W)^* \cong V^* \otimes W^*$, $\iota$ is a ring map, and so induces an $E_\infty$ map $\iota:  \GL_1(\KK) \to  \GL_1(\KK)$.  The restriction of this map to $K(\Z, 2)$ induces negation in $\pi_2$, so the composition
$$\xymatrix@1{\Omega X \ar[r]^-{\rho_H} & K(\Z, 2) \ar[r] & \GL_1(\KK) \ar[r]^-{\iota_*} & \GL_1(\KK)
}$$
throws $\rho_H$ onto $\rho_{-H}$.  Thus $\iota$ induces equivalences of twisted K-(co)homology spectra via
$$\xymatrix@1{\KK^H(X) \simeq (E\Omega X)_+ \wedge_{\Omega X} \KK_{(H)} \ar[r]^-{1 \wedge \iota} & (E\Omega X)_+ \wedge_{\Omega X} \KK_{(-H)} \simeq \KK^{-H}(X) 
}$$

\begin{rem}

We note that if $X$ is the free loop space $X = LBG$ of the classifying space of a connected group $G$, one can identify $\Omega X \simeq LG$ as the loop group of $G$.  When $G$ is a Lie group, explicit projective unitary actions of $LG$ on a Hilbert space $\HH$ (i.e., a representative homomorphism $\rho_H: LG \to \PU(\HH)$) are given in, e.g. \cite{pressley_segal}.  If $G$ is simple and simply connected, the twisting class $H \in H^3(LBG) \cong \Z$ differs from the level $\ell$ of the representation $\rho_H$ by the dual Coxeter number of $G$.

Taking homotopy groups, we obtain
$$\pi_*(\KK^{hLG}) = K_H^*(LBG).$$
The right hand side may be identified with the completion of the level $\ell$ Verlinde algebra of $G$ at its augmentation ideal (see, e.g., \cite{fht1, kriz-wes}).  The left hand side is the $LG$ Borel-equivariant $K$-theory of a point, a stand in for the full (as yet undefined) $LG$-equivariant $K$-theory.  One expects such a theory to return the uncompleted Verlinde algebra as the value on a point.

\end{rem}

\subsection{Atiyah duality in twisted $K$-theory} \label{atiyah_k_section}

One can rephrase Atiyah duality as follows.  For any loop space $G$, there is a \emph{dualizing spectrum} 
$$S^{\ad G}:= G^{hG} = F_G(EG_+, S[G]),$$
the homotopy fixed point spectrum of the action of $G$ on the suspension spectrum of $G$ induced by left multiplication (see, e.g., \cite{klein_dual, rognes}).  Note that $S^{\ad G}$ retains a residual action of $G$ given by right multiplication.  When $G$ is the Kan loop group $G = \Omega M$ of a Poincar\'{e} duality space $M$, there is an equivariant equivalence
$$S^{\ad G} \simeq S^{-TM}$$
between the dualizing spectrum and the (desuspension of the) Spivak normal fibre of $M$, equipped with the path-lifting action.  

In \cite{klein_dual}, Klein constructed the \emph{norm map}
$$\eta: S^{\ad G} \wedge_{hG} E \to E^{hG}$$
for $G$-spectra $E$, and proved it to be an equivalence when $BG$ is finitely dominated.  In particular, for a Poincar\'e duality space $M = BG$, taking $E = S$ to be the sphere spectrum (with trivial $G$-action) gives an equivalence
$$\eta: S^{-TM} \wedge_{G} EG_+ \to F_G(EG_+, S).$$
The lefthand side is easily identified as the Atiyah dual $M^{-TM}$, and the right is the Spanier-Whitehead dual $F(M_+, S)$; thus $\eta$ realizes the Atiyah duality map $a$.

Now let $H \in H^3(M, \Z)$.  The map $\rho_H: G \to \GL_1(\KK)$ equips $G$ with a nontrivial $A_\infty$ action on $\KK$.  Taking $E = \KK$ in the definition of the norm map gives an equivalence
$$\eta: S^{\ad G} \wedge_{hG} \KK \to \KK^{hG}.$$
We have already seen that the right hand side is $\KK_{-H}(M)$, and since $S^{\ad G} \wedge_{G} EG_+ \simeq M^{-TM}$, the left hand side is equivalent to $\KK^H(M)^{-TM}$.  Thus $\eta = a_\KK$ is the Atiyah duality isomorphism of Lemma \ref{atiyah_lem}.

\subsection{The Pontrjagin-Thom map} \label{shriek_section}

Now let $X$ and $Y$ be Poincar\'e duality spaces, let $f: X \to Y$ be a continuous map, and $H \in H^3(Y)$.  We will construct an umkehr map
$$f^!: \KK^{H}(Y)^{-TY} \to \KK^{f^*H}(X)^{-TX}.$$

Write $G = \Omega X$ and $J = \Omega Y$ for the Kan loop groups of these spaces; $f$ induces a homomorphism $F := \Omega f : G \to J$.  Then as above, $\rho_H$ and $\rho_H \circ F$ define actions of $J$ and $G$ on $\KK$ (via $K(\Z, 2)$), and through this action,
$$\begin{array}{ccccc}
\KK^{H}(Y) \simeq \KK_{hJ}, & \KK_{-H}(Y) \simeq \KK^{hJ}, & \KK^{f^*H}(X) \simeq \KK_{hG}, & {\rm and} & \KK_{-f^*H}(X) \simeq \KK^{hG}.
\end{array}$$
The identity map $\KK \to \KK$ is equivariant for $F: G \to J$, so there is an induced forgetful map $f^*: \KK^{hJ} \to \KK^{hG}$.  Under the second and fourth equivalences above, this is simply the usual induced map in a (twisted) cohomology theory.

Composing $f^*$ with Atiyah duality for $X$ and $Y$, however, produces the Pontrjagin-Thom collapse (or umkehr) map $f^!$:
$$\xymatrix@1{\KK^{H}(Y)^{-TY} \ar[r]^-{a_\KK} \ar[d]_-{f^!} & \KK_{-H}(Y) \ar[d]^-{f^*} \\
                            \KK^{f^*H}(X)^{-TX} \ar[r]^-{a_\KK} & \KK_{-f^*H}(X) }
                            $$

Let $\nu = f^*(TY) - TX$; when $f$ is either an immersion or submersion, this is either the normal bundle to $f$ or the inverse of its vertical tangent bundle.  One can suspend $f^!$ by $TY$ to get
$$f^!: \KK^{H}(Y) \to \KK^{f^*H}(X)^{\nu}.$$
When $f$ is an embedding, one can identify this as the map in twisted K-theory induced by the usual Pontrjagin-Thom collapse.  When $\nu$ admits a $\spinc$ structure (e.g., when $X$ and $Y$ are $\spinc$ manifolds), its $\KK$-orientation allows us to rewrite this as $f^!: \KK^{H}(Y) \to \Sigma^{\dim Y - \dim X} \KK^{f^*H}(X)$. 

\section{T-duality}

\subsection{Correspondence diagrams}

Choose a pair of classes $F, \Fhat \in H^2(M)$.  $F$ defines a principal circle bundle $\pi: E \to M$ (e.g., as the unit circle bundle of the complex line bundle whose Chern class is $F$); similarly, $\Fhat$ defines $\pihat: \ehat \to M$. Consider the \emph{correspondence space} $E \times_M \ehat$ (an $S^1 \times \shat^1$-bundle over $M$), and the commutative diagram
\beqn \xymatrix@1{
 & E \times_M \ehat \ar[dl]_-{p} \ar[dr]^-{\phat} & \\
E \ar[dr]_-{\pi} & & \ehat \ar[dl]^-{\pihat} \\
 & M &
} \label{corr_eqn} \eeqn

Define $L$ and $\Lhat$ to be the complex line bundles on $M$ associated to the $U(1)$-bundles $E$ and $\ehat$, and let $V := L \oplus \Lhat$.  We will write $S(V)$ for the unit sphere bundle of $V$; this is an $S^3$-bundle over $M$.  Note that $F \cup \Fhat$ is the Euler class of $S(V)$.  If $F \cup \Fhat = 0$, the Gysin sequence for $r:S(V) \to M$ ensures that $S(V)$ admits a (not necessarily unique) \emph{Thom class} $\Th \in H^3(S(V))$: a class with $r_!(\Th) = 1 \in H^0(M)$.

The natural inclusions of each factor of $L \oplus \Lhat$ define embeddings
$$\begin{array}{ccc}
i: E = S(L) \hookrightarrow S(V) & {\rm and} & \ihat: \ehat = S(\Lhat) \hookrightarrow S(V)
\end{array}$$
We follow Bunke-Schick's definition of T-duality:

\begin{defn}

For classes $H \in H^3(E)$ and $\hhat \in H^3(\ehat)$,  we say that $(E, H)$ and $(\ehat, \hhat)$ are \emph{T-dual} if there exists a Thom class $\Th \in H^3(S(V))$ with $i^* \Th = H$ and $\ihat^* \Th = \hhat$. 

\end{defn}

This definition does not uniquely specify a T-dual pair $(E, H)$ and $(\ehat, \hhat)$. However, we have the following result of \cite{bs} (Corollary 2.10 and Lemmas 2.12 and 2.13; see also the discussion in section 3.1 of \cite{bev}) which characterizes these pairs:

\begin{thm} \label{bs_thm}

There exist $H \in H^3(E)$ and $\hhat \in H^3(\ehat)$ such that $(E, H)$ and $(\ehat, \hhat)$ are T-dual if and only if $F \cup \Fhat = 0$.  In this case, 
$$\begin{array}{cccc}
\pi_!(H) = -\Fhat, & \pihat_!(\hhat) = -F, & {\rm and} & p^*(H) = \phat^*(\hhat).
\end{array}$$
Finally, if such $H$ and $\hhat$ exist, then every other pair of dual cohomology classes are given by $(E, H+ \pi^* b)$ and $(\ehat, \hhat + \pihat^* b)$ for some $b \in H^3(M)$.

\end{thm}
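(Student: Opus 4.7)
The plan is to extract all three statements from the cohomology of the sphere bundle $r : S(V) \to M$, using the Gysin sequence for $r$ (which sees the Euler class $e(V) = F \cup \Fhat$), a Mayer--Vietoris decomposition arising from the fibrewise join structure $S^3 = S^1 * S^1$ that places $E$ and $\ehat$ as linked ``equatorial'' circles, and the projection formula.

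Existence and the parameterization come directly from the Gysin sequence
\beq
\cdots \to H^3(M) \xrightarrow{r^*} H^3(S(V)) \xrightarrow{r_!} H^0(M) \xrightarrow{\cup (F \cup \Fhat)} H^4(M) \to \cdots:
\eeq
a Thom class $\Th$ (one with $r_! \Th = 1$) exists exactly when $F \cup \Fhat = 0$, and any two such differ by $r^* b$ for a unique $b \in H^3(M)$. The identities $i^* r^* = \pi^*$ and $\ihat^* r^* = \pihat^*$ then translate this ambiguity into the claimed relation $(H,\hhat) \sim (H + \pi^* b, \hhat + \pihat^* b)$. For the compatibility $p^* H = \phat^* \hhat$, I would use the fibrewise Morse function $(v,w) \mapsto |v|^2$ on $S(V) = \{|v|^2 + |w|^2 = 1\}$ to form the cover $X_1 = \{|v|^2 > 1/3\}$, $X_2 = \{|v|^2 < 2/3\}$, noting that $X_1 \simeq E$, $X_2 \simeq \ehat$, and $X_1 \cap X_2 \simeq E \times_M \ehat$, under which the inclusions become $p$ and $\phat$. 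The Mayer--Vietoris sequence
\beq
\cdots \to H^3(S(V)) \xrightarrow{(i^*, \ihat^*)} H^3(E) \oplus H^3(\ehat) \xrightarrow{p^* - \phat^*} H^3(E \times_M \ehat) \to \cdots
\eeq
applied to $\Th$ gives $p^* H = \phat^* \hhat$; conversely any compatible pair $(H, \hhat)$ lifts to some class on $S(V)$ which, after adjustment by a summand from $r^* H^3(M)$, becomes a Thom class.

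For the pushforward formulas, factor $\pi = r \circ i$ and apply the projection formula to the codimension-two embedding $i$:
\beq
\pi_!(H) = r_! i_!(i^* \Th) = r_!\bigl(\Th \cdot i_!(1)\bigr).
\eeq
The class $i_!(1) \in H^2(S(V))$ is Poincar\'e dual to $[E]$, and $E$ is precisely the transverse zero locus of the tautological section $(v,w) \mapsto w$ of the complex line bundle $r^* \Lhat \to S(V)$. Hence $i_!(1) = \pm e(r^* \Lhat) = \pm r^* \Fhat$, and the Gysin sequence for $r$ also gives $r^* : H^2(M) \iso H^2(S(V))$, so this identification is unambiguous modulo the orientation sign. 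The projection formula for $r$ and the normalization $r_! \Th = 1$ then yield $\pi_!(H) = \pm \Fhat$; swapping $L$ and $\Lhat$ yields $\pihat_!(\hhat) = \pm F$.

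The main obstacle is the sign analysis: producing $-\Fhat$ rather than $+\Fhat$ requires tracking a chain of orientation conventions --- the complex orientation of $V$, the induced orientation of $S(V) = \partial D(V)$, the orientation of the normal bundle of $E$ in $S(V)$, and the fibrewise orientation defining $\pi_!$. As the paper's introductory footnote signals, the net global sign is convention-dependent (this is the \cite{bev}/\cite{bs} discrepancy); the essential content is the three identities up to sign, and in the \cite{bs} conventions the sign works out to $-$.
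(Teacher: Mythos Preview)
The paper does not give its own proof of this theorem; it is stated with attribution to Bunke--Schick \cite{bs} (Corollary 2.10 and Lemmas 2.12--2.13, with a pointer to section 3.1 of \cite{bev}), and no argument appears in the text. So there is nothing in the paper to compare your proposal against.

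That said, your sketch is a correct and natural reconstruction of the underlying argument. The Gysin sequence for the $S^3$-bundle $r:S(V)\to M$ indeed controls both the existence of a Thom class (precisely when the Euler class $F\cup\Fhat$ vanishes) and the $H^3(M)$-torsor structure on the set of such classes, which under $i^*$ and $\ihat^*$ becomes the parameterization $(H+\pi^*b,\hhat+\pihat^*b)$. The fibrewise join description $S(L\oplus\Lhat)\simeq S(L)*_M S(\Lhat)$ gives exactly the Mayer--Vietoris cover you describe, and the relation $p^*H=\phat^*\hhat$ drops out of exactness at $H^3(E)\oplus H^3(\ehat)$. For the pushforwards, your identification of $i_!(1)$ with $\pm r^*\Fhat$ via the transverse section $(v,w)\mapsto w$ of $r^*\Lhat$ is correct (the normal bundle of $E$ in $S(V)$ is $\pi^*\Lhat$), and the projection formula for $r$ together with $r_!\Th=1$ finishes the computation. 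Your acknowledgement that the sign is convention-dependent matches the paper's own footnote on the \cite{bev}/\cite{bs} discrepancy.

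One small remark: your ``conversely'' clause in the Mayer--Vietoris step (that any compatible pair lifts to a Thom class after adjustment) is not part of the theorem as stated and would need an extra argument to ensure the lifted class can be arranged to have $r_!=1$; but this is not needed for what is claimed.
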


\subsection{The T-duality isomorphism}

The definition of T-duality is a relation between $H$, $\hhat$, and $\Th$ on the level of cohomology classes.  If we let 
$$\begin{array}{cccc}
H: E \to K(\Z, 3), & \hhat: \ehat \to K(\Z, 3), & {\rm and} & \Th: S(V) \to K(\Z, 3)
\end{array}$$
be continuous maps representing these cohomology classes, then the presumption that $(E, H)$ and $(\ehat, \hhat)$ are T-dual corresponds to the \emph{existence} of homotopies $v: H \simeq \Th \circ i$ and $\vhat:\hhat \simeq \Th \circ\ihat$.  We note that the \emph{choice} of $v$ and $\vhat$ is not part of the data of the T-duality between $(E, H)$ and $(\ehat, \hhat)$, only the existence of such homotopies.

In Theorem \ref{bs_thm}, the equality $p^*(H) = \phat^*(\hhat)$ uses the definition of T-dual pairs:
$$p^*(H) = p^*i^*(\Th) = \phat^* \ihat^*(\Th) = \phat^*(\hhat)$$
as well as an explicit homotopy $h: i \circ p \simeq \ihat \circ \phat$ (see section 3.2.1 of \cite{bs}).
Refining this to the level of continuous maps, we obtain a homotopy of maps $E \times_M \ehat \to K(\Z, 3)$
$$\Lambda = (\vhat \circ \phat)^{-1} * h * (v \circ p): H \circ p \simeq \hhat \circ \phat$$
(where multiplication and inversion are done in the homotopy coordinate).  This, in turn, defines an explicit isomorphism 
\beqn \Lambda^*: K^*_{p^*H}(E \times_M \ehat) \to K^*_{\phat^* \hhat}(E \times_M \ehat) \label{u_eqn} \eeqn
Then, for any choice of $v, \vhat$, we have:

\begin{thm}{\cite{bev, bs}} The composite $\phat_! \circ \Lambda^* \circ p^*: K^*_H(E) \to K^{*-1}_{\hhat}(\ehat)$ is an isomorphism. \label{t_thm}
\end{thm}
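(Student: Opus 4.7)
The plan is to deduce Theorem \ref{t_thm} from Theorem \ref{main_thm} by lifting the composite $\phat_!\circ\Lambda^*\circ p^*$ to a morphism of $\KK$-module spectra and identifying it, via the twisted Atiyah duality of Lemma \ref{atiyah_lem}, with the equivalence $t$ adjoint to the nondegenerate pairing $\mu$. Passage to homotopy groups then yields the desired isomorphism; the degree shift $*\mapsto *-1$ in the statement corresponds to the Thom isomorphism for the $\R$-factor in $TE\simeq\pi^*TM\oplus\R$ distinguishing $\KK^H(E)^{-TM}$ from $\KK^H(E)^{-TE}$.

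First I would lift each factor to the spectrum level. The pullback $p^*$ is induced by the $K$-homology pushforward $\KK^{p^*H}(E\times_M\ehat)\to\KK^H(E)$, producing a map $p^*:\KK_H(E)\to\KK_{p^*H}(E\times_M\ehat)$ of $\KK$-module function spectra. The homotopy $\Lambda$ acts on twistings as in (\ref{u_eqn}) to give an equivalence $\Lambda^*:\KK_{p^*H}(E\times_M\ehat)\simeq\KK_{\phat^*\hhat}(E\times_M\ehat)$. The $S^1$-submersion $\phat$ has $\KK$-oriented vertical tangent bundle, so section \ref{shriek_section} produces an umkehr $\phat_!:\KK_{\phat^*\hhat}(E\times_M\ehat)\to\Sigma^{-1}\KK_\hhat(\ehat)$. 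The composite is a map $\tau:\KK_H(E)\to\Sigma^{-1}\KK_\hhat(\ehat)$ inducing $\phat_!\Lambda^*p^*$ on $\pi_{-*}$.

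Next, I would apply Lemma \ref{atiyah_lem} to both sides, together with the sign involution $\iota$ of section \ref{model_section} and the Thom isomorphisms for the $\KK$-oriented circle fibers of $\pi$ and $\pihat$, to rewrite $\tau$ as the adjoint of a pairing
$$\mu':\KK^H(E)^{-TM}\wedge_\KK\KK^{-\hhat}(\ehat)\to\KK.$$
The key step is to identify $\mu'$ with the pairing $\mu$ of Theorem \ref{main_thm}. This uses the geometry of $S(V)$: the complement $S(V)\setminus(i(E)\cup\ihat(\ehat))$ deformation retracts onto the torus bundle $E\times_M\ehat = \{|l|=|\hat l|\}$, exhibiting $S(V)$ as a Mayer--Vietoris union of tubular neighborhoods of $i(E)$ and $\ihat(\ehat)$ glued along the correspondence space. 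Combined with the relations $i^*\Th=H$ and $\ihat^*\Th=\hhat$, this realizes $\mu'$ geometrically as the Pontrjagin--Thom pairing built from $i$, $\ihat$, and $\Th$ on $S(V)$, followed by integration to a point. The concatenation $\Lambda=(\vhat\circ\phat)^{-1}*h*(v\circ p)$ is precisely the homotopy-coherence datum needed to reconcile the two descriptions.

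With that identification in hand, Theorem \ref{main_thm} provides that $t$, and hence $\tau$, is an equivalence, so $\phat_!\Lambda^*p^*=\pi_{-*}\tau$ is an isomorphism. The main obstacle is the third step: verifying that $\Lambda$ correctly encodes the coherence of twistings needed to match $\mu'$ with the geometric pairing $\mu$ coming from $S(V)$. This is where the sign subtleties flagged in section \ref{model_section} become delicate, since one must track the twistings $\Th$, $-\Th$, $H$, $\hhat$ and the involution $\iota$ on $\GL_1(\KK)$ consistently across the Mayer--Vietoris decomposition.
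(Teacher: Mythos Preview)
Your proposal is circular in the logical structure of this paper. You invoke Theorem \ref{main_thm} as input in order to conclude Theorem \ref{t_thm}, but in the paper the implication runs the other way: Theorem \ref{main_thm} is deduced \emph{from} Theorem \ref{t_thm} via the Proposition in the final section, which identifies $t$ with $\phat_!\circ\Lambda^*\circ p^*$ under Atiyah duality and the universal coefficient equivalence. Indeed, the paper states explicitly that it does not aim to reprove the T-duality isomorphism; Theorem \ref{t_thm} is quoted from \cite{bev, bs} as established input. What you have written in your first two paragraphs is essentially a reconstruction of that Proposition (lifting $\phat_!\Lambda^*p^*$ to spectra and matching it with $t$), which is fine, but it does not by itself establish that either side is an equivalence. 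For that you need an \emph{independent} argument, and you have not supplied one.

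The argument the paper summarizes (from Bunke--Schick) is entirely different and does not pass through Theorem \ref{main_thm} at all. One first checks the claim directly when $E$ and $\ehat$ are trivial bundles over $M$: there the correspondence space is $M\times S^1\times\shat^1$, and $\Lambda^*$ is identified with tensoring by the Poincar\'e line bundle on $S^1\times\shat^1$ (the one whose $c_1$ generates $H^2(S^1\times\shat^1)$), for which $\phat_!\circ\Lambda^*\circ p^*$ is a standard Fourier--Mukai style isomorphism. One then extends to arbitrary $M$ (a finite complex) by cellular induction, using Mayer--Vietoris and the five lemma. Your Mayer--Vietoris decomposition of $S(V)$ along $E\times_M\ehat$ is a nice geometric picture, but it is being used here only to compare twistings, not to reduce the problem to a known case; it does not substitute for the local-to-global induction that actually proves the isomorphism.
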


A very brief summary of Bunke-Schick's proof of this result goes as follows.  One verifies directly that the $\phat_! \circ \Lambda^* \circ p^*$ is an isomorphism when $E$ and $\ehat$ are trivial: in that case, $\Lambda^*$ may be identified with the fibrewise multiplication by the (Poincar\'{e}) line bundle whose first Chern class is a generator of $H^2(S^1 \times \shat^1)$.  One extends this to the general setting (over any base $M$ equivalent to a finite complex) by a cellular induction and Mayer-Vietoris argument.

One can ask how different choices of the homotopies $v$ and $\vhat$ will affect this isomorphism.  The set of homotopy classes of such $v$ (resp. $\vhat$) forms a torsor for $H^2(E)$ (resp. $H^2(\ehat)$).  Changing $v$ and $\vhat$ to $w$ and $\what$, respectively, will give rise to different isomorphisms $\Lambda_v^*$ and $\Lambda_w^*$ in (\ref{u_eqn}).  The set of isomorphisms arising in this way is then a torsor for the subgroup $S := p^*H^2(E) + \phat^* H^2(\ehat) \leq H^2(E \times_M \ehat)$.  

An element of $S$ corresponds to a line bundle $\Qbar$ on $E \times_M \ehat$ which is a tensor product $\Qbar = p^*Q \otimes \phat^*\Qhat$ of line bundles pulled back from $E$ and $\ehat$.  The corresponding isomorphism $\Lambda_w^*$ is deformed from $\Lambda_v^*$ by tensoring with $Q$.  However, since $\Qbar$ is pulled back from $E$ and $\ehat$, the duality isomorphisms of Theorem \ref{t_thm} for $v$ and $w$ are related by
$$\phat_! \circ \Lambda_w^* \circ p^*(\xi) = \Qbar \otimes \phat_! \circ \Lambda_v^* \circ p^*(Q \otimes \xi).$$
This indeed shows that $\phat_! \circ \Lambda_w^* \circ p^*$ is an isomorphism if and only if $\phat_! \circ \Lambda_v^* \circ p^*$ is.

\section{T-duality as a form of Atiyah duality}

Consider the diagram
\beqn \xymatrix@1{
E \times \ehat \ar[d]_-{\pi\times \pihat} & E \times_M \ehat \ar[d]_-{\pitilde} \ar[l]_-{\dhat} \ar[dr]^-{c} \\
M \times M & M \ar[l]_-{\Delta} \ar[r]^-{c} & pt.
} \label{push_pull_eqn} \eeqn
Note that the square is Cartesian.  

The bottom row of (\ref{push_pull_eqn}) is the diagram from which the Atiyah duality isomorphism $a$ was constructed.  Passing along the top of the diagram gives a map 
$$c \circ \dhat^!:E^{-TM} \wedge \ehat_+ \to S,$$
since the normal bundle to $\dhat$ is the pullback of $TM$ to $E \times_M \ehat$.  This admits an adjoint 
$$E^{-TM} \to F(\ehat_+, S),$$
generalizing the Atiyah duality isomorphism above, though in this case the map is not an equivalence. Convolving this with the isomorphism $\Lambda^*$ of the previous section, however, will yield the T-duality isomorphism of Theorem \ref{t_thm}. 

Specifically, there is an umkehr map 
$$\dhat^!: \KK^{(H, -\hhat)}(E \times \ehat) \to \KK^{\dhat^*(H, -\hhat)}(E \times_M \ehat)^\nu,$$
where $\nu \cong TM$ is the normal bundle of $\dhat: E \times_M \ehat \to E \times \ehat$, and $(H, -\hhat)$ is the difference of the cohomology classes $H$ and $\hhat$, pulled back to the product $E\times \ehat$.

Notice that $\dhat = (p, \phat)$.  Thus $\dhat^*(H, -\hhat) = p^*(H) - \phat^*(\hhat) = 0$, so the induced $\KK$-module $\KK^{\dhat^*(H, -\hhat)}(E \times_M \ehat)$ is untwisted.  In fact, the homotopy $\Lambda$ defines a specific equivalence
$$\Lambda: \KK^{\dhat^*(H, -\hhat)}(E \times_M \ehat) \to \KK \wedge (E \times_M \ehat_+).$$
Further, $\KK^{(H, -\hhat)}(E \times \ehat) \simeq \KK^H(E) \wedge_\KK \KK^{-\hhat}(\ehat)$.  Desuspending by $TM$, we may form the composite
$$\xymatrix@1{
\mu: \KK^H(E)^{-TM} \wedge_\KK \KK^{-\hhat}(\ehat) \ar[r]^-{\dhat^!} & \KK^{\dhat^*(H, -\hhat)}(E \times_M \ehat) \ar[r]^-{\Lambda} & \KK \wedge (E \times_M \ehat_+) \ar[r]^-{1 \wedge c} & \KK.
}$$ 

\begin{defn} We will call $\mu$ the \emph{T-duality pairing}. \end{defn}

All three maps in the composite defining $\mu$ are $\KK$-module maps, so the adjoint $t: \KK^H(E)^{-TM} \to F( \KK^{-\hhat}(\ehat) , \KK)$ factors through $F_\KK(\KK^{-\hhat}(\ehat), \KK)$.  Our main result, Theorem \ref{main_thm}, is that this map
$$t: \KK^H(E)^{-TM} \to F_\KK( \KK^{-\hhat}(\ehat) , \KK)$$
is an equivalence.  That is, $\mu$ is a non-degenerate pairing of $\KK$-modules.  This follows immediately from Theorem \ref{t_thm} and the following:

\begin{prop}

There is a homotopy commutative diagram
$$\xymatrix@1{
\KK^H(E)^{-TM} \ar[d]_-{a_\KK} \ar[r]^-{t} & F_\KK( \KK^{-\hhat}(\ehat) , \KK) \ar[d]_-=^-u \\
\Sigma \KK_{H}(E) \ar[r]_{\phat_! \circ \Lambda^* \circ p^*} & \KK_{\hhat}(\ehat)
}$$
where $a_\KK$ and $u$ are the Atiyah duality and universal coefficient equivalences.  

\end{prop}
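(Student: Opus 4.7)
The plan is to reduce the claim to an adjoint statement and then verify it by base change. By the tensor--hom adjunction defining $t$, together with the pushforward--pullback adjunction $(\phat_!, \phat^*)$ in twisted $K$-theory, saying that $u \circ t = \phat_! \circ \Lambda^* \circ p^* \circ a_\KK$ is equivalent to saying that $\mu$ agrees with the ``correspondence pairing'' obtained by first applying $a_\KK$ to the $\KK^H(E)^{-TM}$-factor, then pulling back along $p$ and $\phat$ to $E \times_M \ehat$, trivializing the combined twist via $\Lambda^*$, and evaluating on $E \times_M \ehat$.

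To identify $a_\KK$: since $\pi: E \to M$ is a principal circle bundle, $TE \cong \pi^* TM \oplus \R$, and so $\KK^H(E)^{-\pi^*TM} \simeq \Sigma\KK^H(E)^{-TE}$. Lemma \ref{atiyah_lem} gives $\KK^H(E)^{-TE} \simeq \KK_{-H}(E)$, and composing with the equivalence $\KK_{-H}(E) \simeq \KK_H(E)$ induced by the involution $\iota$ of section \ref{model_section} yields the map $a_\KK$ of the diagram, landing in $\Sigma\KK_H(E)$. A parallel sign-flip reinterpretation applies to $u$, which combines the definitional equality $F_\KK(\KK^{-\hhat}(\ehat), \KK) = \KK_{-\hhat}(\ehat)$ with the $\iota$-induced equivalence $\KK_{-\hhat}(\ehat) \simeq \KK_\hhat(\ehat)$.

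For the identification of $\mu$ with the correspondence pairing, the key input is base change for the cartesian square on the left of (\ref{push_pull_eqn}). The embedding $\dhat = (p,\phat): E \times_M \ehat \hookrightarrow E \times \ehat$ is the pullback of $\Delta: M \to M \times M$ along $\pi \times \pihat$, with normal bundle $\pitilde^*TM$ matching the pulled-back normal bundle of $\Delta$. Naturality of the Pontrjagin--Thom collapse for cartesian squares then expresses $\dhat^!$, in the twisted $\KK$-module category, as the pullback of $\Delta^!$ along $\pi \times \pihat$; and $\Delta^!$ is precisely the ingredient out of which the Atiyah duality $a_\KK$ is built (sections \ref{atiyah_k_section} and \ref{shriek_section}). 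Desuspending by $TM$, composing with $\Lambda$, and collapsing $E \times_M \ehat$ to a point, the defining composite for $\mu$ then becomes exactly the correspondence pairing above.

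The main obstacle is establishing this base-change compatibility in the twisted equivariant setting: one needs a homotopy commutative square relating $\dhat^!$ to $\Delta^!$ at the level of $\KK$-module Thom spectra, compatibly with the $\Omega E$- and $\Omega \ehat$-actions that encode the twistings $H$ and $-\hhat$ in the model of section \ref{model_section}. Granting this, together with the identification of $\phat_!$ via Atiyah duality along the circle fiber of $\phat$ (Proposition \ref{PT_map_prop} and section \ref{shriek_section}), the commutativity of the diagram follows formally from the definitions of $\mu$, $t$, $a_\KK$, and the two adjunctions.
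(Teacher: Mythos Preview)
Your overall strategy---pass to adjoints and show that $\mu$ agrees with the pairing $\sigma$ adjoint to $u^{-1}\circ(\phat_!\circ\Lambda^*\circ p^*)\circ a_\KK$---is exactly what the paper does. The difficulty is in the identification itself, and here your argument has a gap. You express $\dhat^!$ via base change of $\Delta^!$ for the diagonal $\Delta:M\to M\times M$ along $\pi\times\pihat$, and then assert that ``$\Delta^!$ is precisely the ingredient out of which the Atiyah duality $a_\KK$ is built.'' But $a_\KK$ in this proposition is Atiyah duality for $E$ (suspended once, using $TE\cong\pi^*TM\oplus\R$); its adjoint pairing is $c_*\circ\Delta_E^!$, built from the diagonal $\Delta_E:E\to E\times E$, not from $\Delta_M$. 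Your cartesian square over $M$ therefore never touches the diagonal that actually appears when you unwind $\sigma$, and you have not supplied the step linking $\Delta_M^!$ back to $\Delta_E^!$.

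The paper sidesteps this by staying entirely upstairs. It factors $\dhat$ as
\[
E\times_M\ehat \xrightarrow{\ \Delta_{E\times_M\ehat}\ } (E\times_M\ehat)\times(E\times_M\ehat) \xrightarrow{\ p\times\phat\ } E\times\ehat,
\]
so that $\dhat^! \simeq \Delta_{E\times_M\ehat}^!\circ(p^!\wedge\phat^!)$. On the other side, unwinding $\sigma$ directly from the definitions of $a_\KK$ and of the dual of $\phat_!\circ\Lambda^*\circ p^*$ gives
\[
\sigma \;=\; c_*\circ\Delta_E^!\circ\bigl[\,1\wedge(p_*\circ\Lambda_*\circ\phat^!)\,\bigr] \;\simeq\; c_*\circ\Lambda_*\circ\Delta_{E\times_M\ehat}^!\circ(p^!\wedge\phat^!) \;=\; c_*\circ\Lambda_*\circ\dhat^! \;=\; \mu.
\]
The middle homotopy is a compatibility between $\Delta_E^!$ and $\Delta_{E\times_M\ehat}^!$ along the projection $p$, together with naturality of $\Lambda$. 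Once you replace your $\Delta_M$ by the diagonals of $E$ and of $E\times_M\ehat$, your sketch collapses to this argument, and the ``twisted equivariant base change'' you flagged as the main obstacle is replaced by the elementary factorization $\dhat=(p\times\phat)\circ\Delta_{E\times_M\ehat}$.
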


In the left column, we note that since $E$ is a principal $S^1$-bundle, there is a splitting $TE = TM \oplus 1$.   Suspending the Atiyah duality map then gives
$$a_K: \KK^H(E)^{-TM} = \Sigma \KK^H(E)^{-TE} \to \Sigma \KK_{H}(E).$$

\begin{proof} 

Passing along the lower left part of the diagram defines a map $\KK^H(E)^{-TM} \to F_\KK( \KK^{-\hhat}(\ehat) , \KK)$.  The adjoint of this map is of the form
$$\sigma: \KK^H(E)^{-TM} \wedge_\KK \KK^{-\hhat}(\ehat) \to \KK$$ 
The proposition is equivalent to the statement that this map is homotopic to $\mu$.  By definition, $\sigma$ is the suspension of the composite
$$\xymatrix@1{
\KK^H(E)^{-TE} \wedge_\KK \KK^{-\hhat}(\ehat) \ar[rr]^-{1 \wedge (p_* \circ \Lambda_* \circ \phat^!)} & & \KK^H(E)^{-TE} \wedge_\KK \Sigma^{-1} \KK^{-H}(E) \ar[r]^-{\Delta_E^!} & \Sigma^{-1} \KK \wedge E_+ \ar[r]^-{1 \wedge c}& \Sigma^{-1} \KK.
}$$ 
where $\Delta_E$ is the diagonal on $E$.  However,
$$\sigma = c_* \circ \Delta_E^! \circ [1 \wedge (p_* \circ \Lambda_* \circ \phat^!)] \simeq c_* \circ \Lambda_* \circ \Delta_{E\times_M \ehat}^! \circ (p^! \wedge \phat^!),$$
where $\Delta_{E\times_M \ehat}$ is the diagonal on $E \times_M \ehat$.

The following diagram is commutative, by inspection:
$$\xymatrix@1{
E \times_M \ehat \ar[rr]^-{ \Delta_{E\times_M \ehat}} \ar[drr]_-{\dhat} & & (E \times_M \ehat) \times (E \times_M \ehat) \ar[d]^-{p \times \phat} \\
 & & E \times \ehat
}$$
Thus $\dhat^! =  \Delta_{E\times_M \ehat}^! \circ (p^! \wedge \phat^!)$, and so
$$\sigma \simeq c_* \circ \Lambda_* \circ \dhat^! = \mu.$$

\end{proof}

\bibliography{biblio}

\end{document}